\documentclass[11pt]{article}

\usepackage{amsmath,amssymb,amsthm,amsfonts}
\usepackage{graphicx}
\usepackage[round]{natbib}
\usepackage[hidelinks]{hyperref}
\usepackage{setspace}
\usepackage{array}
\usepackage{booktabs}
\usepackage[dvipsnames]{xcolor}

\renewcommand{\arraystretch}{1.3}

\theoremstyle{plain}
\newtheorem{theorem}{Theorem}[section]
\newtheorem{proposition}[theorem]{Proposition}
\newtheorem{lemma}[theorem]{Lemma}

\theoremstyle{definition}
\newtheorem{definition}[theorem]{Definition}
\newtheorem{assumption}[theorem]{Assumption}
\newtheorem{remark}[theorem]{Remark}

\DeclareMathOperator{\Var}{Var}

\newcommand{\E}{\mathbb{E}}
\newcommand{\R}{\mathbb{R}}
\newcommand{\N}{\mathcal{N}}
\newcommand{\F}{\mathcal{F}}

\title{Action-Space Entropy Regularization in Bayesian Markowitz}

\author{
Andy Au\\
Department of Mathematics and Statistics\\
Boston University\\
\texttt{aa314@bu.edu}\\[6pt]
}

\date{\today}

\begin{document}
\maketitle


\begin{abstract}
We solve the entropy-regularized mean--variance portfolio problem under Bayesian drift uncertainty. We combine continuous-time Bayesian filtering with stochastic policy optimization; the main finding is negative: the two mechanisms are orthogonal. Posterior dynamics are policy-independent, so entropy regularization cannot accelerate learning about the unknown drift. The mean control is identical to the deterministic Bayesian Markowitz feedback, and entropy enters only through policy variance. On the technical side, the optimal policy is Gaussian, the value function is quadratic in wealth, and the leading belief-dependent coefficient closes in exponential form. The framework recovers both parent models as limiting cases.
\end{abstract}

\newpage
\section{Introduction}
\label{sec:intro}

Portfolio optimization under parameter uncertainty is a fundamental problem in finance. The classical mean--variance framework of \citet{markowitz1952} produces clean solutions when parameters are known, but real markets have unknown drift. The difficulty of estimating expected returns \citep{merton1980} leads to unstable allocations when one naively plugs in estimated means.

Two recent lines of work address this from different angles.

\begin{enumerate}
     \item \textbf{Bayesian learning:} \citet{defranco2018} solve the continuous-time Markowitz problem when drift is unknown. Using Kalman--Bucy filtering, they reduce the problem to a semilinear PDE in the belief state and obtain closed-form solutions for Gaussian priors. The optimal control is deterministic feedback.
    
    \item \textbf{Entropy-regularized control:} \citet{wang2019} study mean--variance optimization with known drift but replace deterministic controls with stochastic policies. Adding an entropy bonus yields tractable Gaussian solutions and connects to relaxed stochastic control. The problem closes through the associated HJB equation.
\end{enumerate}

This paper considers the intersection: entropy-regularized control when drift is unknown. A natural hope is that the interaction produces behavior absent from either model individually---that entropy regularization might serve as a form of exploration under drift uncertainty. At the level of optimal behavior it does not: the two mechanisms turn out to be orthogonal. The one genuine interaction effect is technical. The closed-form optimizer is admissible only under the parameter restriction $P_0 T < 7/2$ (Proposition~\ref{prop:admissibility}), a threshold absent from both parent models.

\subsection{Summary of Results}

\begin{enumerate}
    \item \textbf{Closed-form solution.} Under partial information, the optimal policy is Gaussian and the value function is quadratic in wealth with coefficients $A(t,m) = e^{\alpha(t)m^2 + \gamma(t)}$ in closed form (Lemma~\ref{lem:gaussian}, Proposition~\ref{prop:closed-form}). The framework recovers \citet{wang2019} as $P_0 \to 0$ and \citet{defranco2018} as $\tau \to 0$.
    
    \item \textbf{Orthogonality.} Posterior dynamics are policy-independent, so entropy regularization cannot accelerate learning about $\rho$. The mean control is $\tau$-independent and identical to the deterministic Bayesian Markowitz feedback; entropy enters only the policy variance.
    
    \item \textbf{Belief-dependent policy variance.} The optimal policy variance grows exponentially in $m_t^2$ along sample paths: $\varsigma^{*2} = (\tau/2\sigma^2)\exp(|\alpha(t)|m_t^2 - \gamma(t))$. This is the analogue of the known-drift result $\varsigma^{*2} \propto \exp(\rho^2(T-t))$ from \citet{wang2019}, with $m_t$ replacing $\rho$ and $\alpha(t)$ absorbing $P_t$ (Proposition~\ref{prop:beliefvar}). The quantitative form is different but the qualitative story is the same: variance grows with the magnitude of the posterior mean driving the mean control.

    \item \textbf{Admissibility threshold.} The closed-form optimizer is admissible if and only if $P_0 T < 7/2$ (Proposition~\ref{prop:admissibility}). The optimal policy variance grows exponentially in $m_t^2$, and this restriction is exactly what keeps its second moment integrable against the Gaussian fluctuations of $m_t$. Neither parent model exhibits such a constraint.
\end{enumerate}

\begin{center}
\begin{tabular}{lcc}
\toprule
 & Known drift ($P_0=0$) & Unknown drift ($P_0>0$) \\
\midrule
Deterministic ($\tau=0$) & Classical MV & \citet{defranco2018} \\
Entropy-regularized ($\tau>0$) & \citet{wang2019} & \textbf{This paper} \\
\bottomrule
\end{tabular}
\end{center}

\subsection{Key Notation}

The most frequently used symbols; a complete table appears in Appendix~\ref{app:notation}.

\begin{center}
\renewcommand{\arraystretch}{1.35}
\begin{tabular}{p{1.8cm} p{11cm}}
\toprule
$\rho$ & Sharpe ratio $(\mu - r)/\sigma$ (unknown, to be learned) \\
$m_t, P_t$ & Posterior mean and variance of $\rho$ given observations to time $t$ \\
$X_t$ & Discounted wealth \\
$\pi_t$ & Policy: probability distribution over positions at time $t$ \\
$\tau$ & Entropy weight (regularization strength) \\
$A(t,m)$ & Leading coefficient in value function ansatz; $V_{xx} = 2A$ \\
\bottomrule
\end{tabular}
\end{center}

\section{Problem Formulation}
\label{sec:setup}

\subsection{Market Model}

Consider a market with a risk-free asset earning rate $r$ and a risky asset with price $S_t$ following
\[
    \frac{dS_t}{S_t} = \mu\,dt + \sigma\,dW_t,
\]
where $\sigma > 0$ is known and $\mu$ is unknown. We parameterize the unknown drift by the Sharpe ratio $\rho := \frac{\mu - r}{\sigma}$.

\begin{assumption}
\label{assump:prior}
The Sharpe ratio $\rho$ is independent of $W$ and has Gaussian prior: $\rho \sim \N(m_0, P_0)$.
\end{assumption}

We model the excess drift $\mu - r$ as constant but unknown under the physical measure. The investor's uncertainty is captured by the prior on $\rho$.

\subsection{Bayesian Filtering}

The investor observes prices but not $\rho$ directly. Define the \emph{whitened} cumulative excess return
\[
    Y_t := \int_0^t \frac{1}{\sigma}\left(\frac{dS_s}{S_s} - r\,ds\right),
\]
which satisfies $dY_t = \rho\,dt + dW_t$. Let $\F_t := \sigma(Y_s : s \le t)$ be the observation filtration---equivalently $\sigma(S_s : s \le t)$, since $\sigma$ and $r$ are known---and define the posterior moments
\[
    m_t := \E[\rho \mid \F_t], \qquad P_t := \Var(\rho \mid \F_t).
\]

\begin{proposition}[Posterior dynamics]
\label{prop:filter}
The posterior remains Gaussian: $\rho \mid \F_t \sim \N(m_t, P_t)$, with dynamics
\begin{align}
    dm_t &= P_t\,d\widehat{W}_t, \label{eq:dm} \\
    dP_t &= -P_t^2\,dt, \label{eq:dP}
\end{align}
where the innovation $d\widehat{W}_t := dY_t - m_t\,dt$ is an $(\F_t)$-Brownian motion. The posterior variance has the closed form $P_t = \frac{P_0}{1 + P_0 t}$.
\end{proposition}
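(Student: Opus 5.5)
The plan is to compute the posterior explicitly, exploiting the fact that $Y$ is a linear Gaussian functional of the independent pair $(\rho, W)$. This gives Gaussianity and the closed forms for $m_t$ and $P_t$ directly, and the SDEs then follow by It\^o's formula.

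\textbf{Step 1: sufficient statistic and conjugacy.} By Girsanov's theorem, the law of $(Y_s)_{s\le t}$ conditional on $\rho$ is absolutely continuous with respect to Wiener measure, with density
\[
\exp\bigl(\rho Y_t - \tfrac12\rho^2 t\bigr).
\]
Bayes' rule (the Kallianpur--Striebel formula) then gives the posterior density of $\rho$ given $\F_t$ as proportional to
\[
\exp\bigl(\rho Y_t - \tfrac12 \rho^2 t\bigr)\exp\bigl(-(\rho-m_0)^2/(2P_0)\bigr).
\]
Completing the square shows this is Gaussian, with
\[
P_t^{-1} = P_0^{-1} + t, \qquad m_t = P_t\bigl(m_0/P_0 + Y_t\bigr).
\]
Hence $P_t = P_0/(1+P_0 t)$, which is deterministic. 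The main technical care here is justifying the Girsanov step: $\rho$ is unbounded, so one works conditionally on $\rho$, where the drift is constant and Novikov's condition holds trivially. An alternative is to observe directly that $(\rho, Y_{s_1},\dots,Y_{s_k})$ is jointly Gaussian, so every finite-dimensional conditional law is Gaussian; the posterior depends on the path only through $Y_t$, and one passes to $\F_t$ by a monotone class argument.

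\textbf{Step 2: dynamics.} Differentiating the closed form gives $dP_t = -P_t^2\,dt$ immediately. Applying It\^o's product rule to $m_t = P_t(m_0/P_0 + Y_t)$ gives
\[
dm_t = -P_t^2\bigl(m_0/P_0+Y_t\bigr)\,dt + P_t\,dY_t = -P_t m_t\,dt + P_t\,dY_t = P_t\bigl(dY_t - m_t\,dt\bigr),
\]
which is \eqref{eq:dm}.

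\textbf{Step 3: the innovation is an $(\F_t)$-Brownian motion.} This is the step I expect to need the most justification. By definition,
\[
\widehat W_t = Y_t - \int_0^t m_s\,ds = W_t + \int_0^t (\rho - m_s)\,ds,
\]
which is continuous and $(\F_t)$-adapted with quadratic variation $t$. To conclude by L\'evy's characterization, I would show it is an $(\F_t)$-martingale. For $s<t$,
\[
\E[\widehat W_t-\widehat W_s\mid\F_s] = \E[W_t-W_s\mid\F_s] + \int_s^t \E\bigl[\rho - m_u \mid \F_s\bigr]\,du.
\]
The first term vanishes because $W_t - W_s$ is independent of $\F_s \subset \sigma(\rho, W_u : u\le s)$. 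The second vanishes by the tower property, since $\E[m_u\mid\F_s]=m_s=\E[\rho\mid\F_s]$. Integrability holds because $\rho$ and $m_u$ are Gaussian with bounded variances. L\'evy's theorem then completes the proof.
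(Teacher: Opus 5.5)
Your argument is correct, and it takes a different route from the paper. The paper gives no derivation: it appeals to the general Kalman--Bucy filter, citing De Franco et al.

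In that approach, the filter equation $dm_t = P_t\,d\widehat{W}_t$ and the Riccati equation $\dot P_t = -P_t^2$ are read off from linear--Gaussian filtering theory. The fact that the innovation is an $(\F_t)$-Brownian motion comes with that theory (the innovation theorem). The closed form $P_t = P_0/(1+P_0 t)$ comes last, by solving the Riccati ODE.

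You go in the opposite direction, using the fact that $\rho$ is static:
\begin{itemize}
\item You obtain the posterior in closed form by a conjugate Bayes update. The likelihood is the Cameron--Martin density $\exp(\rho Y_t - \tfrac12\rho^2 t)$, with sufficient statistic $Y_t$.
\item You recover the SDEs by It\^o's product rule applied to $m_t = (m_0/P_0 + Y_t)/(1/P_0 + t)$.
\item You check the innovation property directly, via the tower property and L\'evy's characterization.
\end{itemize}

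What your route buys: it is self-contained and elementary, and it makes Gaussianity and the dependence of $m_t$ on the path only through $(t, Y_t)$ explicit. Your Step 3 also uses nothing Gaussian; the martingale argument works for any prior with $\E|\rho| < \infty$.

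What the Kalman--Bucy route buys: generality. It carries over unchanged to time-varying coefficients or to a drift that itself evolves, for example an Ornstein--Uhlenbeck Sharpe ratio, where no one-shot conjugate formula for the posterior is available.
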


This follows from standard Kalman--Bucy filtering; see, e.g., \citet{defranco2018}. The closed form $P_t = P_0/(1+P_0 t)$ is deterministic and $m_t$ is a martingale. The structural fact we use repeatedly is that posterior dynamics depend on $Y$ rather than the control $\pi$: \emph{filtering is policy-independent}.

\subsection{Wealth Dynamics and Bilinear Structure}

Let $X_t := e^{-rt}\bar{X}_t$ be discounted wealth and $u_t$ the discounted dollar position in the risky asset. Self-financing and discounting give $dX_t = u_t(\,(\mu - r)\,dt + \sigma\,dW_t\,) = \sigma u_t(\rho\,dt + dW_t)$. Substituting the innovation decomposition $dW_t = d\widehat{W}_t - (\rho - m_t)\,dt$:
\begin{equation}
\label{eq:wealth}
    dX_t = \sigma u_t (m_t\,dt + d\widehat{W}_t).
\end{equation}

The drift $\sigma m_t u_t$ is bilinear, a product of belief and control. This breaks from classical LQG, where the drift is jointly affine in state and control. The bilinear structure prevents the HJB equation from closing into Riccati ODEs (Proposition~\ref{prop:polynomial}) and requires the exponential substitution of Section~\ref{sec:closed-form}.

Under the observation filtration, the pair $(X_t, m_t)$ is Markov, with $P_t$ as a deterministic time-dependent parameter. Through filtering, we have gone from partial information to a fully observed Markov control problem.

\subsection{Entropy-Regularized Objective}

Following \citet{wang2019}, we replace the pointwise control $u_t$ with a stochastic policy $\pi_t$, a probability distribution over positions.

\begin{definition}[Admissible policy]
\label{def:admissible}
An admissible policy is a family $\{\pi_t\}_{t \in [0,T]}$ of probability densities on $\R$, progressively measurable with respect to the observation filtration $(\F_t)_{t \in [0,T]}$, satisfying $\E\!\left[\int_0^T \int_\R u^2 \pi_t(u)\,du\,dt\right] < \infty$.
\end{definition}

We work at the generator level, where the controlled dynamics depend on $\pi$ only through its moments; this is the relaxed (exploratory) control formulation developed in \citet{wang2019b} and applied to mean--variance in \citet{wang2019}.

The entropy-regularized value function is
\begin{equation}
\label{eq:value}
    V(t,x,m,P) = \inf_{\pi \text{ admissible}} \E\!\left[ (X_T - w)^2 + \tau \int_t^T H(\pi_s)\,ds \;\Big|\; X_t = x, m_t = m, P_t = P \right],
\end{equation}
where $w \in \R$ is the target wealth level, $\tau > 0$ is the entropy weight, and $H(\pi) := \int \pi(u) \log \pi(u)\,du$ is the negative of differential entropy. We treat the posterior variance $P$ as a free initial condition for the dynamic programming argument; since $P_t$ is in fact deterministic, the value along the realized trajectory is recovered by restricting to the characteristic $\{P = P_t\}$ in Section~\ref{sec:closed-form}. Following the classical embedding of \citet{zhou2000}, $w$ plays the role of a Lagrange multiplier for the mean constraint $\mathbb{E}[X_T]=z$, and sweeping $w$ traces the efficient frontier. 

The entropy term penalizes concentrated policies. As $\tau \to 0$, we recover the deterministic Bayesian Markowitz problem.

\section{HJB Equation and Optimal Policy}
\label{sec:hjb}

\subsection{Generator}

Under policy $\pi_t$ with mean $\bar{u}$ and variance $\varsigma^2$, the generator of any smooth $\varphi(t,x,m,P)$ is
\begin{equation}
\label{eq:generator}
    \mathcal{L}^\pi \varphi = \sigma \bar{u} m\,\varphi_x + \frac{\sigma^2}{2}(\bar{u}^2 + \varsigma^2)\varphi_{xx} + \sigma \bar{u} P\,\varphi_{xm} + \frac{P^2}{2}\varphi_{mm} - P^2\varphi_P.
\end{equation}

The policy enters only through its first two moments: $\E_\pi[u] = \bar{u}$ and $\E_\pi[u^2] = \bar{u}^2 + \varsigma^2$. The mixed term $\varphi_{xm}$ involves $\bar{u}$ only because $dX\,dm = \sigma u P_t\,dt$ is linear in $u$.

\subsection{HJB Equation}

Since $(X_t, m_t, P_t)$ is Markov under the observation filtration (with $P_t$ deterministic), the dynamic programming principle applies. The HJB equation is
\begin{equation}
\label{eq:hjb}
    \partial_t V + \inf_{\pi} \left\{ \mathcal{L}^\pi V + \tau H(\pi) \right\} = 0, \qquad V(T,x,m,P) = (x-w)^2.
\end{equation}

\subsection{Gaussian Optimality}

\begin{lemma}[Optimal policy is Gaussian]
\label{lem:gaussian}
Suppose $V$ is $C^1$ in $(t,P)$ and $C^2$ in $(x,m)$, with $V_{xx} > 0$. The optimal policy is $\pi^* = \N(\bar{u}^*, \varsigma^{*2})$ with
\begin{equation}
\label{eq:optimal-policy}
    \bar{u}^* = -\frac{mV_x + PV_{xm}}{\sigma V_{xx}}, \qquad \varsigma^{*2} = \frac{\tau}{\sigma^2 V_{xx}}.
\end{equation}
\end{lemma}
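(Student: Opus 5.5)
The plan is to solve the pointwise minimization inside the HJB equation \eqref{eq:hjb} at fixed $(t,x,m,P)$. By \eqref{eq:generator}, the generator depends on $\pi$ only through its first two moments, so the problem splits into two nested steps. First, among all densities with prescribed mean $\bar u$ and variance $\varsigma^2$, minimize $H(\pi)$. Second, optimize over $(\bar u,\varsigma^2)\in\R\times(0,\infty)$. Densities with infinite second moment are excluded: $V_{xx}>0$ makes the $\varphi_{xx}$ term $+\infty$, while $H(\pi)$ is bounded below by a logarithm of the second moment, so such densities cannot be optimal. The terms $\tfrac{P^2}{2}V_{mm}$ and $-P^2V_P$ do not involve $\pi$ and can be dropped from the minimization.

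\textbf{Inner step.} I will use the maximum-entropy property of the Gaussian. For a density $\pi$ with mean $\bar u$ and variance $\varsigma^2$, let $\phi=\N(\bar u,\varsigma^2)$. Nonnegativity of the relative entropy gives
\[
0\le \int \pi\log\frac{\pi}{\phi}\,du = H(\pi)-\int\pi\log\phi\,du .
\]
Since $\log\phi$ is quadratic in $u$, $\int\pi\log\phi\,du=\int\phi\log\phi\,du=H(\phi)=-\tfrac12\log(2\pi e\varsigma^2)$. Hence $H(\pi)\ge H(\phi)$, with equality iff $\pi=\phi$ a.e. So the infimum over the fiber is attained uniquely at the Gaussian.

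\textbf{Outer step.} Substituting, the expression to minimize becomes
\[
\sigma\bar u\,(mV_x+PV_{xm})+\frac{\sigma^2}{2}\bar u^2V_{xx} \;+\; \frac{\sigma^2}{2}\varsigma^2V_{xx}-\frac{\tau}{2}\log\varsigma^2 \;-\;\frac{\tau}{2}\log(2\pi e),
\]
which separates into a function of $\bar u$ plus a function of $\varsigma^2$.
\begin{itemize}
\item The $\bar u$-part is a strictly convex quadratic because $V_{xx}>0$. Its minimizer is $\bar u^*=-(mV_x+PV_{xm})/(\sigma V_{xx})$.
\item The $\varsigma^2$-part, $\tfrac{\sigma^2}{2}V_{xx}s-\tfrac{\tau}{2}\log s$ on $s>0$, is strictly convex and tends to $+\infty$ at both ends. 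Setting its derivative to zero gives $s=\tau/(\sigma^2V_{xx})$.
\end{itemize}
These are exactly the formulas in \eqref{eq:optimal-policy}.

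\textbf{Main obstacle.} The pointwise calculation is routine; the real gap is the meaning of ``optimal policy''. One must pass from the HJB minimizer to optimality in \eqref{eq:value}, which needs a verification argument. That argument applies It\^o's formula to $V(s,X_s,m_s,P_s)$ under an arbitrary admissible $\pi$, uses $\partial_tV+\mathcal L^\pi V+\tau H(\pi)\ge0$ with equality at $\pi^*$, and controls the local martingale terms. It also requires the feedback $\pi^*$ to be admissible in the sense of Definition~\ref{def:admissible}, an integrability issue the paper evidently treats separately in Proposition~\ref{prop:admissibility}. I would therefore state the lemma as identifying the unique minimizer of the Hamiltonian under the stated regularity, and defer the verification and admissibility of the feedback to the later results.
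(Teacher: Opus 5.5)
Your proof is correct, and it reaches the Gaussian by a different route from the paper's.

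The paper takes the first variation of $\Phi(\pi)$ over all densities subject to $\int\pi=1$. It reads off the Gibbs form $\pi^*(u)\propto\exp(-\tau^{-1}[\sigma u(mV_x+PV_{xm})+\tfrac{\sigma^2}{2}u^2V_{xx}])$. It then upgrades this stationary point to the unique minimizer by noting that $\Phi$ is linear in $\pi$ apart from the strictly convex term $\tau H(\pi)$, and completes the square.

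You instead slice the feasible set into fibers of fixed $(\bar u,\varsigma^2)$. Nonnegativity of relative entropy shows the Gaussian is the unique minimizer of $H$ on each fiber, and you finish with two decoupled scalar convex problems.

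Your route buys three things:
\begin{itemize}
\item It avoids formal calculus of variations on a space of densities (differentiability of $\pi\mapsto\int\pi\log\pi$, choice of admissible perturbation directions).
\item It makes the separation in Remark~\ref{rem:mv-separation} literal, as a sum of a $\bar u$-part and a $\varsigma^2$-part.
\item It delivers the minimized value $-\frac{(mV_x+PV_{xm})^2}{2V_{xx}}-\frac{\tau}{2}\log\frac{2\pi\tau}{\sigma^2V_{xx}}$, and hence \eqref{eq:hjb-reduced}, at no extra cost.
\end{itemize}
The paper's route gives the Boltzmann form in one stroke. It does not rely on $\pi$ entering only through its first two moments, so it is the argument that survives changes to the action space or to the $u$-dependence of the generator.

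Your reading of ``optimal'' as the Hamiltonian minimizer, with verification and admissibility deferred, matches how the paper uses the lemma (Remark~\ref{rem:verification}, Proposition~\ref{prop:admissibility}).

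One small imprecision concerns densities with infinite second moment. The lower bound on $H(\pi)$ in terms of the second moment is vacuous for them, and such densities can have $H(\pi)=-\infty$, which makes $\Phi$ an $\infty-\infty$ expression. The clean fix is to restrict the pointwise minimization to finite-second-moment densities, in line with Definition~\ref{def:admissible}.
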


\begin{proof}
The $\pi$-dependent terms in $\mathcal{L}^\pi V + \tau H(\pi)$ are
\[
    \Phi(\pi) := \sigma \bar{u} (mV_x + PV_{xm}) + \frac{\sigma^2}{2}(\bar{u}^2 + \varsigma^2)V_{xx} + \tau H(\pi).
\]
Taking the first variation under $\int \pi = 1$ gives
\[
    \log \pi^*(u) + 1 + \frac{1}{\tau}\left( \sigma u (mV_x + PV_{xm}) + \frac{\sigma^2 u^2}{2}V_{xx} \right) = \text{const}.
\]
The exponent is quadratic in $u$, so $\pi^*$ is Gaussian. Since $\Phi$ is linear in $\pi$ apart from the strictly convex entropy term $\tau H(\pi)$, this stationary point is the unique minimizer; completing the square gives the stated mean and variance. 
\end{proof}

\begin{remark}[Mean--variance separation]
\label{rem:mv-separation}
The optimal policy separates cleanly: the mean $\bar{u}^*$ depends on beliefs and value gradients, while the variance $\varsigma^{*2}$ depends only on the curvature $V_{xx}$ and the entropy weight $\tau$. The coupling term $PV_{xm}$ in the mean is new relative to the known-drift case and reflects drift uncertainty. The mean control contains no explicit $\tau$; Section~\ref{sec:value} shows the implicit dependence through $V$ vanishes as well, since all $\tau$-dependence of $V$ is confined to a wealth-independent additive term. Entropy regularization therefore affects only the policy variance.
\end{remark}

\subsection{Reduced HJB}

Substituting the Gaussian optimizer into \eqref{eq:hjb}:
\begin{equation}
\label{eq:hjb-reduced}
    0 = V_t + \frac{P^2}{2}V_{mm} - P^2 V_P - \frac{(mV_x + PV_{xm})^2}{2V_{xx}} - \frac{\tau}{2}\log\!\left( \frac{2\pi\tau}{\sigma^2 V_{xx}} \right),
\end{equation}
with terminal condition $V(T,x,m,P) = (x-w)^2$.

\section{Value Function Structure}
\label{sec:value}

In classical LQG and in \citet{wang2019}, the value function is polynomial
in the state, with coefficients that solve ODEs in time. The bilinear drift 
$\sigma m_t u_t$ prevents this here; the nonlinear term 
$\frac{(mV_x + PV_{xm})^2}{V_{xx}}$ increases the degree in $m$ faster 
than the linear PDE terms can cancel.

\begin{proposition}[Polynomial impossibility]
\label{prop:polynomial}
For $P > 0$, no polynomial ansatz in $(x,m)$ of finite degree with $V_{xx} \not\equiv 0$ satisfies the reduced HJB \eqref{eq:hjb-reduced}. (See Appendix~\ref{app:polynomial} for the full proof.)
\end{proposition}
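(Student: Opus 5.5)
The plan is to show that the logarithmic term in \eqref{eq:hjb-reduced} forces $V_{xx}$ to be independent of $(x,m)$, and that the bilinear drift then produces an $m^2$-dependence in the $x^2$-coefficient that no $m$-independent curvature can cancel. Throughout, $\tau>0$ is fixed and the ansatz is a polynomial in $(x,m)$ of finite degree, with coefficients that are $C^1$ functions of $(t,P)$. On the region where the log term is defined we have $V_{xx}>0$; the hypothesis $V_{xx}\not\equiv 0$ guarantees that this region is a nonempty open set.

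\textbf{Step 1: $V_{xx}$ depends only on $(t,P)$.} Write $Q:=V_{xx}$, a polynomial in $(x,m)$. Multiply \eqref{eq:hjb-reduced} by $Q$. Every term except the entropy term becomes a polynomial in $(x,m)$, so
\[
\tfrac{\tau}{2}\,Q\log Q \;=\; Q\bigl(V_t+\tfrac{P^2}{2}V_{mm}-P^2V_P\bigr)-\tfrac12(mV_x+PV_{xm})^2-\tfrac{\tau}{2}\,Q\log\!\Big(\tfrac{2\pi\tau}{\sigma^2}\Big)
\]
is a polynomial on an open set. Suppose $Q$ is nonconstant in $(x,m)$ for some fixed $(t,P)$. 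Restrict to a line $s\mapsto (x_0+s\xi,\,m_0+s\eta)$ along which $q(s):=Q$ is a nonconstant univariate polynomial; such a line exists, and we may take it to pass through a point of the open set. Then $q\log q$ would agree with a polynomial $p$ on an interval. Both sides are real-analytic wherever $q>0$, so by analytic continuation the identity extends to the whole component where $q>0$.

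There are two ways to derive the contradiction:
\begin{itemize}
\item If $q>0$ for all large $s$, then $q\log q\sim c\,s^k\log s$, which cannot equal a polynomial.
\item Alternatively, work purely algebraically. Differentiating $q\log q=p$ gives $q'\log q = p'-q'$, so $\log q=(p'-q')/q'$ would be a rational function of $s$. This is impossible for a nonconstant polynomial $q$, since $\frac{d}{ds}\log q=q'/q$ has simple poles at the roots of $q$ with nonzero residues, while the derivative of a rational function has no simple poles with nonzero residue.
\end{itemize}

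I expect this step to be the main obstacle, mainly in making the ``analytic on an open set $\Rightarrow$ global identity'' bookkeeping clean. The residue argument is the one I would write out. Its conclusion is $Q=2c(t,P)$ with $c>0$.

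\textbf{Step 2: reduce the ansatz.} Since $V_{xx}=2c(t,P)$, integrating twice in $x$ gives
\[
V=c(t,P)\,x^2+b(t,m,P)\,x+a(t,m,P),
\]
with $a,b$ polynomial in $m$. Then $V_x=2cx+b$ and $V_{xm}=b_m$, so
\[
mV_x+PV_{xm}=2c\,m\,x+\bigl(mb+Pb_m\bigr).
\]
Consequently $\frac{(mV_x+PV_{xm})^2}{2V_{xx}}$ has $x^2$-coefficient $\frac{4c^2m^2}{4c}=c\,m^2$. This is where the bilinear drift $\sigma m_t u_t$ enters.

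\textbf{Step 3: match the $x^2$-coefficient.} Now the log term depends only on $(t,P)$, and the linear operator $V_t+\frac{P^2}{2}V_{mm}-P^2V_P$ contributes $c_t-P^2c_P$ to the $x^2$-coefficient; note $c_{mm}=0$. Collecting $x^2$ terms in \eqref{eq:hjb-reduced} gives
\[
c_t-P^2c_P-c\,m^2=0 \quad\text{for all } m.
\]
The coefficient of $m^2$ forces $c\equiv0$. This contradicts $V_{xx}=2c>0$, and more generally contradicts $V_{xx}\not\equiv0$, which proves the proposition for every $P>0$.

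As a remark, I would note where $P>0$ is used: it is what makes the filtered problem genuinely partial-information. In the known-drift case ($P=0$) the variable $m$ is a frozen parameter, so $c$ may depend on it, as in $c=e^{-m^2(T-t)}$. That choice reproduces the \citet{wang2019} solution, in which a polynomial-in-$x$ ansatz does close. For $P>0$, the $m$-dependence that $c$ would need is exactly what the polynomial class forbids, and it is what motivates the exponential form $A(t,m)=e^{\alpha(t)m^2+\gamma(t)}$.
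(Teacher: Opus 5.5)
Your argument is correct, but it takes a different route from the paper.

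The paper never uses the entropy term. It fixes $(t,P)$ and sets $p=\deg_x V\ge 2$. It then divides \eqref{eq:hjb-reduced} by $x^p$ and lets $x\to\infty$, which kills the logarithm and leaves a nonlinear equation for the top coefficient $a_p$. Finally it compares $m$-degrees: $\frac{p}{2(p-1)}(ma_p+Pa_{p,m})^2/a_p$ grows like $m^{r+2}$, while the linear terms have degree at most $r=\deg_m a_p$.

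You instead exploit the logarithm. Because $\log q$ is not rational for a nonconstant polynomial $q$, $V_{xx}$ is forced to be independent of $(x,m)$, which collapses everything to $p=2$, $r=0$. Your Step 3, $c_t-P^2c_P=c\,m^2$, is exactly the paper's degree count in that special case.

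What your route buys is exact coefficient matching. You need no asymptotic expansion of $(mV_x+PV_{xm})^2/(2V_{xx})$, no care about zeros of $a_p$, and no bookkeeping about the $m$-degree of $\partial_t a_p$ and $\partial_P a_p$ at a fixed $(t,P)$. The price is that Step 1 rests entirely on $\tau>0$. At $\tau=0$ (the deterministic Bayesian Markowitz problem) there is no log term, $V_{xx}$ may depend on $m$, and you would be back to the paper's leading-order argument. The paper's proof is therefore $\tau$-independent and attributes the obstruction to the bilinear drift alone, which is the claim the surrounding text makes.

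Two minor points. First, the right-hand side of your display for $\frac{\tau}{2}Q\log Q$ has the wrong overall sign; this is harmless, since only its polynomiality is used. Second, nothing in Steps 1--3 uses $P>0$: they go through verbatim at $P=0$ for an ansatz polynomial in $(x,m)$. Your closing remark therefore explains why the known-drift coefficient $e^{-m^2(T-t)}$ lies outside the polynomial class; it does not identify a place where the hypothesis enters your proof. The paper likewise uses $P>0$ only to show $ma_p+Pa_{p,m}\not\equiv 0$, which is also true at $P=0$.
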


So we look for a solution quadratic in wealth but nonpolynomial in belief.

\subsection{Quadratic-in-Wealth Ansatz}

The terminal condition is quadratic in just $x$. So we seek a solution of the form
\begin{equation}
\label{eq:ansatz}
    V(t,x,m,P) = A(t,m,P)x^2 + B(t,m,P)x + C(t,m,P),
\end{equation}
where the coefficients $A$, $B$, $C$ depend on beliefs $(m, P)$ but need not be polynomial in $m$. We verify below that this ansatz satisfies the HJB.

\begin{proposition}[Coefficient system]
\label{prop:coefficient-system}
The ansatz \eqref{eq:ansatz} with $A > 0$ satisfies \eqref{eq:hjb-reduced} if and only if $(A,B,C)$ solve
\begin{align}
    0 &= A_t + \frac{P^2}{2}A_{mm} - P^2 A_P - \frac{\Gamma^2}{A}, \label{eq:A-pde} \\
    0 &= B_t + \frac{P^2}{2}B_{mm} - P^2 B_P - \frac{\Gamma\Lambda}{A}, \label{eq:B-pde} \\
    0 &= C_t + \frac{P^2}{2}C_{mm} - P^2 C_P - \frac{\Lambda^2}{4A} - \frac{\tau}{2}\log\!\left(\frac{\pi\tau}{\sigma^2 A}\right), \label{eq:C-pde}
\end{align}
where $\Gamma := mA + PA_m$ and $\Lambda := mB + PB_m$, with terminal conditions $A(T) = 1$, $B(T) = -2w$, $C(T) = w^2$.
\end{proposition}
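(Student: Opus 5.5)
The plan is to substitute the ansatz \eqref{eq:ansatz} directly into the reduced HJB \eqref{eq:hjb-reduced}, write the result as a polynomial in $x$, and match coefficients. All coefficients depend only on $(t,m,P)$, so the ``if and only if'' follows once the residual is shown to be a polynomial in $x$ of degree at most two.

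\textbf{Step 1: derivatives.} From the ansatz,
\[
V_x = 2Ax + B,\qquad V_{xx} = 2A,\qquad V_{xm} = 2A_m x + B_m .
\]
Hence
\[
mV_x + PV_{xm} = 2(mA + PA_m)x + (mB + PB_m) = 2\Gamma x + \Lambda .
\]
The linear operator $\partial_t + \tfrac{P^2}{2}\partial_{mm} - P^2\partial_P$ acts coefficientwise. It therefore contributes $(A_t + \tfrac{P^2}{2}A_{mm} - P^2A_P)x^2$, plus the analogous $B$-term times $x$, plus the analogous $C$-term.

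\textbf{Step 2: nonlinear terms.} Since $V_{xx}=2A>0$, we have
\[
\frac{(2\Gamma x+\Lambda)^2}{4A} = \frac{\Gamma^2}{A}x^2 + \frac{\Gamma\Lambda}{A}x + \frac{\Lambda^2}{4A}.
\]
The entropy term becomes $\frac{\tau}{2}\log\!\left(\frac{2\pi\tau}{2\sigma^2 A}\right) = \frac{\tau}{2}\log\!\left(\frac{\pi\tau}{\sigma^2 A}\right)$. It is independent of $x$, so it falls entirely into the constant coefficient.

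\textbf{Step 3: matching.} The residual is $R_2 x^2 + R_1 x + R_0$, where $R_2, R_1, R_0$ are exactly the right-hand sides of \eqref{eq:A-pde}--\eqref{eq:C-pde}.

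For the ``if'' direction: if the three PDEs hold, the residual vanishes identically.

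For the ``only if'' direction: suppose the residual vanishes for all $x\in\R$ at each fixed $(t,m,P)$. A quadratic polynomial that vanishes identically has zero coefficients, so $R_2=R_1=R_0=0$.

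\textbf{Step 4: terminal conditions.} Match $A(T)x^2+B(T)x+C(T) = (x-w)^2$ for all $x$. This gives $A(T)=1$, $B(T)=-2w$ and $C(T)=w^2$.

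There is no real obstacle here. The only points needing care are the factor bookkeeping $V_{xx}=2A$ in both the quotient term and the logarithm, and the requirement $A>0$, which keeps the logarithm defined and the Gaussian optimizer of Lemma~\ref{lem:gaussian} valid.
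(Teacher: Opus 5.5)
Your proposal is correct and matches the argument the paper intends; the paper states this proposition without a written proof. You substitute the ansatz to get $mV_x + PV_{xm} = 2\Gamma x + \Lambda$ and $V_{xx} = 2A$, expand the residual as a quadratic in $x$ whose coefficients are exactly the three stated PDEs (with the factor $2$ handled correctly in both the quotient and the logarithm), and read off the terminal data from $(x-w)^2$.
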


The system is triangular: \eqref{eq:A-pde} is closed in $A$; given $A$, \eqref{eq:B-pde} is linear in $B$; given $(A,B)$, \eqref{eq:C-pde} is linear in $C$.

\begin{proposition}[$w$-separation]
\label{prop:w-separation}
Within the system \eqref{eq:A-pde}--\eqref{eq:C-pde}: (i) $A$ is independent of $w$; (ii) $B = -2wA$; (iii) writing $C = w^2 A + D$, the function $D$ is independent of $w$ and satisfies
\begin{equation}
\label{eq:D-pde}
    0 = D_t + \frac{P^2}{2}D_{mm} - P^2 D_P - \frac{\tau}{2}\log\!\left(\frac{\pi\tau}{\sigma^2 A}\right), \qquad D(T) = 0.
\end{equation}
\end{proposition}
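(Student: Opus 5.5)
The argument runs through the triangular structure of the system \eqref{eq:A-pde}--\eqref{eq:C-pde}, one equation at a time. Throughout I treat these PDEs as backward problems with terminal data at $t=T$ and assume uniqueness within the class of solutions considered, with $A>0$. Uniqueness is what turns "this candidate solves the equation" into "this is the solution."

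For (i), observe that \eqref{eq:A-pde} and its terminal condition $A(T)=1$ contain no $w$. By uniqueness, $A$ is therefore independent of $w$.

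For (ii), I substitute the candidate $B=-2wA$ into \eqref{eq:B-pde}. Then $\Lambda = mB+PB_m = -2w(mA+PA_m) = -2w\Gamma$. The left-hand side becomes
\[
-2w\Bigl(A_t+\tfrac{P^2}{2}A_{mm}-P^2A_P\Bigr) + \frac{2w\Gamma^2}{A},
\]
which equals $-2w$ times the right-hand side of \eqref{eq:A-pde}, and so vanishes. The terminal condition also matches: $-2wA(T)=-2w$. Since \eqref{eq:B-pde} is linear in $B$ once $A$ is fixed, uniqueness of its terminal value problem gives $B=-2wA$.

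For (iii), I write $C=w^2A+D$. With $\Lambda=-2w\Gamma$ we have $\Lambda^2/(4A)=w^2\Gamma^2/A$. The linear operator applied to $w^2A$ gives $w^2\Gamma^2/A$ by \eqref{eq:A-pde}. Substituting into \eqref{eq:C-pde}, the $w^2$ terms cancel exactly, and what remains is \eqref{eq:D-pde} for $D$. The terminal condition is $D(T)=C(T)-w^2A(T)=w^2-w^2=0$. Every coefficient and source term in \eqref{eq:D-pde} involves only $A$, $P$ and $\tau$, and by (i) $A$ is $w$-free. Uniqueness for this linear problem then shows that $D$ is independent of $w$.

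The calculations are routine; the main obstacle is justifying uniqueness for these degenerate backward PDEs. The diffusion acts only in $m$, and transport runs along the characteristics $\dot P=-P^2$. For the linear equations in $B$ and $D$, I would argue uniqueness by a Feynman--Kac representation along the process $dm=P\,d\widehat W$ with $P$ following its characteristic, under suitable growth conditions. For (i) I would simply interpret the statement within the solution class under consideration. Since the closed form for $A$ is given later, (i) is immediate once $A$ is shown to be the unique solution in that class.
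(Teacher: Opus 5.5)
Your proposal is correct and matches the argument the paper leaves implicit: the paper gives no separate proof beyond noting that the system is triangular, and its content is exactly your substitution $\Lambda=-2w\Gamma$ and $\Lambda^2/(4A)=w^2\Gamma^2/A$, which cancels the $w$- and $w^2$-terms against the $A$-equation, together with the terminal checks $-2wA(T)=-2w$ and $C(T)-w^2A(T)=0$. Your uniqueness discussion is extra rigor the paper does not supply (it proceeds by ansatz and defers identification with the value function to the verification remark), and it has one slip: the $B$-equation contains the drift term $-(P\Gamma/A)B_m$ and the potential $-(m\Gamma/A)B$, so its Feynman--Kac process is not $dm=P\,d\widehat W$. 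Writing $B=A\beta$ fixes this cleanly, reducing the $B$-equation to $\beta_t+\tfrac{P^2}{2}\beta_{mm}-P^2\beta_P-Pm\,\beta_m=0$ with $\beta(T)=-2w$, which the constant $-2w$ solves.
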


So the problem reduces to solving \eqref{eq:A-pde} for $A$, then \eqref{eq:D-pde} for $D$.

\subsection{Closed-form Solution}
\label{sec:closed-form}

Since $P_t = P_0/(1+P_0 t)$ is deterministic, $P$ in the HJB \eqref{eq:hjb-reduced} can be eliminated by restricting to the characteristic $\{P = P_t\}$. Define
\[
\widetilde{V}(t,x,m) := V(t,x,m,P_t), \qquad \widetilde{A}(t,m) := A(t,m,P_t).
\]
Differentiating along the characteristic using $dP_t/dt = -P_t^2$ gives
\[
\widetilde{V}_t \;=\; V_t - P_t^2\, V_P,
\]
so the $-P^2 V_P$ term is absorbed into the time derivative and the reduced PDE for $\widetilde{V}$ carries no explicit $P$-dependence.

We then write $A(t,m)$ for $\widetilde{A}(t,m)$ when no ambiguity arises.

The exponential substitution $A = e^R$---the analogue of the Bayesian risk-premium representation of \citet[Theorem 4.1]{defranco2018}---transforms \eqref{eq:A-pde} into a semilinear PDE; as in \citet[Section 4.3.2]{defranco2018}, it admits a quadratic solution $R(t,m) = \alpha(t)m^2 + \gamma(t)$.

\begin{proposition}[Closed-form solution]
\label{prop:closed-form}
For Gaussian priors, $A(t,m) = e^{\alpha(t)m^2 + \gamma(t)}$ with
\begin{equation}
\label{eq:alpha}
    \alpha(t) = -\frac{(1 + P_0 t)(T - t)}{1 + P_0(2T - t)},
\end{equation}
and
\begin{equation}
\label{eq:gamma}
    \gamma(t) = \int_t^T P_s^2 \alpha(s)\,ds = \frac{1}{2}\log\!\left(\frac{(1 + P_0 t)(1 + P_0(2T - t))}{(1 + P_0 T)^2}\right).
\end{equation}
\end{proposition}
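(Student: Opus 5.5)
The plan is to substitute the exponential ansatz into the $P$-free form of \eqref{eq:A-pde} along the characteristic, reduce to ODEs in $t$, and solve them explicitly. By the discussion preceding the statement, $\widetilde A(t,m)=A(t,m,P_t)$ satisfies
\[
0=A_t+\tfrac{P_t^2}{2}A_{mm}-\frac{(mA+P_tA_m)^2}{A},\qquad A(T,m)=1,
\]
since the term $-P^2A_P$ is absorbed into $\widetilde A_t$ by $dP_t/dt=-P_t^2$ (Proposition~\ref{prop:filter}). Writing $A=e^R$, so that $A_m=R_mA$ and $A_{mm}=(R_{mm}+R_m^2)A$, and dividing by $A>0$ gives the semilinear equation
\[
R_t+\tfrac{P_t^2}{2}\bigl(R_{mm}+R_m^2\bigr)-\bigl(m+P_tR_m\bigr)^2=0,\qquad R(T,m)=0.
\]

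Next I insert $R=\alpha(t)m^2+\gamma(t)$, so $R_m=2\alpha m$ and $R_{mm}=2\alpha$, and match powers of $m$. Only $m^2$ and $m^0$ occur, so the ansatz closes. The $m^2$ coefficient gives the Riccati equation
\[
\alpha'=1+4P_t\alpha+2P_t^2\alpha^2,\qquad \alpha(T)=0,
\]
and the constant coefficient gives $\gamma'=-P_t^2\alpha$ with $\gamma(T)=0$. Hence $\gamma(t)=\int_t^TP_s^2\alpha(s)\,ds$, which is the first equality in \eqref{eq:gamma}.

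The main step is solving the Riccati equation with its time-dependent coefficient $P_t$. I would use the variable $q:=P_t\alpha$. Using $P_t'=-P_t^2$,
\[
q'=-P_t^2\alpha+P_t\alpha'=P_t\bigl(1+3q+2q^2\bigr)=P_t(1+q)(1+2q),\qquad q(T)=0.
\]
This equation is separable. Since $\int P_s\,ds=\log(1+P_0s)$, partial fractions
\[
\frac{dq}{(1+q)(1+2q)}=\Bigl(\frac{2}{1+2q}-\frac{1}{1+q}\Bigr)dq
\]
give $\frac{1+2q}{1+q}=\frac{1+P_0t}{1+P_0T}$. Solving for $q$ yields
\[
q=-\frac{P_0(T-t)}{1+P_0(2T-t)},\qquad 1+q=\frac{1+P_0T}{1+P_0(2T-t)},\qquad 1+2q=\frac{1+P_0t}{1+P_0(2T-t)}.
\]
Dividing $q$ by $P_t=P_0/(1+P_0t)$ gives \eqref{eq:alpha}. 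Alternatively, one can simply differentiate the claimed $\alpha$ and check the Riccati equation directly.

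For the closed form of $\gamma$, I write $P_s^2\alpha=P_sq$. From the $q$-ODE,
\[
P_sq=\frac{q\,q'}{(1+q)(1+2q)}=\Bigl(\frac{1}{1+q}-\frac{1}{1+2q}\Bigr)q'=\frac{d}{ds}\Bigl[\log(1+q)-\tfrac12\log(1+2q)\Bigr],
\]
using the partial fractions $\frac{q}{(1+q)(1+2q)}=\frac{1}{1+q}-\frac{1}{1+2q}$. Integrating from $t$ to $T$ with $q(T)=0$ gives
\[
\gamma(t)=\tfrac12\log(1+2q(t))-\log(1+q(t)).
\]
Substituting the expressions for $1+q$ and $1+2q$ gives $\tfrac12\log\frac{(1+P_0t)(1+P_0(2T-t))}{(1+P_0T)^2}$, as in \eqref{eq:gamma}.

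Finally, I note that $A=e^{R}>0$ everywhere, which is consistent with the hypothesis $A>0$ in Proposition~\ref{prop:coefficient-system}. I also note that $1+P_0(2T-t)>0$ on $[0,T]$, so all the expressions above are well defined. Uniqueness of the ODE solutions with their terminal data is standard; $\alpha$ is bounded on $[0,T]$, so there is no blow-up.
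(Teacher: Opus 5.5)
Your proof is correct and follows the paper's route: you substitute $A=e^{\alpha m^2+\gamma}$ into the characteristic-reduced $A$-equation, match the $m^2$ and $m^0$ coefficients to get $\alpha'=1+4P_t\alpha+2P_t^2\alpha^2$ and $\gamma'=-P_t^2\alpha$, and integrate backward from $T$. The only difference is that you carry out the steps the paper's sketch leaves as ``solved by the stated formula'' and ``direct integration''; the substitution $q=P_t\alpha$ makes the Riccati equation separable, and the exact-derivative identity for $P_s q$ gives the closed form of $\gamma$, and both computations check out.
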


\begin{proof}[Proof sketch]
A linear term $\beta(t)m$ in $R$ is consistent only with $\beta \equiv 0$ (it satisfies a linear homogeneous ODE with $\beta(T) = 0$), reflecting evenness in $m$. With $A = e^{\alpha m^2 + \gamma}$ and $P = P_t$ deterministic, the $A$-PDE reduces to matching coefficients:
\begin{align*}
    m^2\text{-terms:} &\quad \alpha' = 1 + 4P_t\alpha + 2P_t^2\alpha^2, \quad \alpha(T) = 0, \\
    m^0\text{-terms:} &\quad \gamma' = -P_t^2\alpha, \quad \gamma(T) = 0.
\end{align*}
The $\alpha$-equation is a Riccati ODE solved by the stated formula. Integrating $\gamma' = -P_t^2\alpha$ from $t$ to $T$ with terminal condition $\gamma(T) = 0$ gives $\gamma(t) = \int_t^T P_s^2 \alpha(s)\,ds$. The closed form follows by direct integration.
\end{proof}

\begin{remark}[$\tau$ independence of $A$]
\label{rem:tau-absent}
The entropy weight $\tau$ does not appear in $A$ or $\alpha$; it enters only the $D$ equation. The coefficient $A$ determines $V_{xx} = 2A$, which is set by the underlying Bayesian Markowitz problem regardless of whether the agent randomizes. Since $\alpha(t) < 0$ and $\gamma(t) < 0$ for $t < T$, we have $A(t,m) < 1$ for all $m$ whenever $t < T$: drift uncertainty reduces the value-function curvature relative to the terminal condition. (At $m = 0$ this is $A(t,0) = e^{\gamma(t)} < 1$; $\gamma(t) < 0$ follows from AM--GM applied to $(1 + P_0 t)$ and $(1 + P_0(2T-t))$, whose sum $2(1 + P_0 T)$ is constant.)

The formula for $\alpha$ is more readable in terms of current posterior variance:
\[
    \alpha(t) = -\frac{T - t}{1 + 2P_t(T - t)}.
\]
\end{remark}

\subsection{The Entropy Premium}

Proposition~\ref{prop:w-separation} reduced the problem to solving \eqref{eq:D-pde} for $D$. Since $P_t$ is deterministic, we work along the characteristic and write $\tilde{D}(t,m) := D(t,m,P_t)$. Using $\frac{dP_t}{dt} = -P_t^2$, the PDE becomes
\begin{equation}
\label{eq:D-reduced}
    0 = \tilde{D}_t + \frac{P_t^2}{2}\tilde{D}_{mm} - \frac{\tau}{2}\log\!\left(\frac{\pi\tau}{\sigma^2 A(t,m)}\right), \qquad \tilde{D}(T,m) = 0.
\end{equation}

Substituting $A = e^{\alpha m^2 + \gamma}$, the source term becomes
\[
    \log\!\left(\frac{\pi\tau}{\sigma^2 A}\right) = \log\frac{\pi\tau}{\sigma^2} - \alpha(t)m^2 - \gamma(t),
\]
which is quadratic in $m$. This motivates the ansatz $\tilde{D}(t,m) = \eta(t)m^2 + \zeta(t)$.

\begin{proposition}[Entropy premium]
\label{prop:entropy-premium}
The function $D$ is given by $D(t,m,P_t) = \eta(t)m^2 + \zeta(t)$, where
\begin{align}
    \eta(t) &= \frac{\tau}{2}\int_t^T \alpha(s)\,ds, \label{eq:eta} \\
    \zeta(t) &= -\int_t^T \left( \frac{\tau}{2}\left[\log\frac{\pi\tau}{\sigma^2} - \gamma(s)\right] - P_s^2\,\eta(s) \right) ds. \label{eq:zeta}
\end{align}
Since $\alpha(t) < 0$ for $t < T$, we have $\eta(t) < 0$ for $t < T$.
\end{proposition}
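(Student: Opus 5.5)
We substitute the ansatz $\tilde{D}(t,m) = \eta(t)m^2 + \zeta(t)$ directly into the reduced equation \eqref{eq:D-reduced} and match powers of $m$. The PDE is linear in $\tilde{D}$, and by Proposition~\ref{prop:closed-form} its source term is a quadratic polynomial in $m$. So the quadratic ansatz is closed under the operator $\partial_t + \frac{P_t^2}{2}\partial_{mm}$: it maps $\eta m^2 + \zeta$ to $\eta' m^2 + (\zeta' + P_t^2\eta)$.

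Using the expansion of the source term displayed before the proposition, the equation becomes
\[
0 = \eta'(t)m^2 + \zeta'(t) + P_t^2\eta(t) - \frac{\tau}{2}\Bigl[\log\frac{\pi\tau}{\sigma^2} - \alpha(t)m^2 - \gamma(t)\Bigr].
\]
The terminal condition $\tilde{D}(T,m)=0$ for all $m$ forces $\eta(T)=\zeta(T)=0$. We then match coefficients.

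\textbf{The $m^2$ coefficient.} This gives $\eta' = -\frac{\tau}{2}\alpha$. Integrating from $t$ to $T$ with $\eta(T)=0$ yields \eqref{eq:eta}.

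\textbf{The $m^0$ coefficient.} This gives $\zeta' = \frac{\tau}{2}[\log\frac{\pi\tau}{\sigma^2} - \gamma] - P_t^2\eta$. Integrating with $\zeta(T)=0$ yields \eqref{eq:zeta}.

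The sign claim follows immediately. By Remark~\ref{rem:tau-absent}, $\alpha(s)<0$ for $s<T$, since every factor in \eqref{eq:alpha} is positive for $P_0\ge 0$ and $s<T$. Hence $\int_t^T\alpha<0$, and so $\eta(t)<0$ whenever $\tau>0$.

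The only genuinely delicate point is uniqueness: we must argue that the quadratic solution is \emph{the} $D$, rather than merely some solution. I would handle this with a Feynman--Kac representation. Since $m_t$ solves $dm = P_t\,d\widehat W$, any solution of \eqref{eq:D-reduced} with polynomial growth in $m$ satisfies
\[
\tilde D(t,m) = -\frac{\tau}{2}\,\E\Bigl[\int_t^T \bigl(\log\tfrac{\pi\tau}{\sigma^2} - \alpha(s)m_s^2 - \gamma(s)\bigr)ds \,\Big|\, m_t=m\Bigr].
\]
The right-hand side can be computed directly using $\E[m_s^2\mid m_t=m] = m^2 + \int_t^s P_u^2\,du$. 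This recovers the same formulas and confirms uniqueness within the polynomial-growth class. The fact that Gaussian moments of $m_s$ are finite is all that is needed here, because the source term is only quadratic in $m$. The exponential integrability issue behind Proposition~\ref{prop:admissibility} does not arise at this stage.
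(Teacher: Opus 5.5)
Your proposal is correct and follows the paper's proof: you substitute $\tilde D = \eta m^2 + \zeta$ into \eqref{eq:D-reduced}, match the $m^2$ and $m^0$ coefficients, integrate backward from zero terminal data, and read off the sign of $\eta$ from $\alpha<0$. Your Feynman--Kac uniqueness step goes beyond the paper, which only exhibits the quadratic solution and never addresses uniqueness; that step is valid, and after a Fubini swap in $\int_t^T P_s^2\eta(s)\,ds$ it reproduces the same $\zeta$.
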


\begin{proof}
Substituting $\tilde{D} = \eta m^2 + \zeta$ into \eqref{eq:D-reduced} and matching coefficients:
\begin{align*}
    m^2\text{-terms:} &\quad \eta'(t) = -\frac{\tau}{2}\alpha(t), \quad \eta(T) = 0, \\
    m^0\text{-terms:} &\quad \zeta'(t) = \frac{\tau}{2}\left[\log\frac{\pi\tau}{\sigma^2} - \gamma(t)\right] - P_t^2\,\eta(t), \quad \zeta(T) = 0.
\end{align*}
Integrating backward from $T$ gives \eqref{eq:eta}--\eqref{eq:zeta}.
\end{proof}

\begin{remark}[Probabilistic form of the premium]
A Feynman--Kac representation of \eqref{eq:D-reduced} gives
\[
    D(t,m) = \tau\,\E\!\left[\int_t^T H(\pi^*_s)\,ds \;\Big|\; m_t = m\right] + \frac{\tau}{2}(T - t),
\]
the expected entropy cost plus a deterministic noise-injection cost: along $\pi^*$, $\tfrac{1}{2}\sigma^2\varsigma^{*2}V_{xx} \equiv \tfrac{\tau}{2}$, so each unit of time contributes exactly $\tau/2$. This is the value-function counterpart of the policy-level fact that randomization is purchased at a fixed marginal rate.
\end{remark}

The complete value function is thus
\begin{equation}
\label{eq:value-complete}
    V(t,x,m) = e^{\alpha(t)m^2 + \gamma(t)}(x-w)^2 + \eta(t)m^2 + \zeta(t).
\end{equation}

\begin{remark}[Policy independence of entropy premium]
\label{rem:policy-independence}
The entropy premium $D = \eta m^2 + \zeta$ is independent of wealth $x$. Since the optimal policy \eqref{eq:optimal-policy} depends only on $V_x$, $V_{xm}$, and $V_{xx}$, all of which involve only $A$, the function $D$ does not affect optimal behavior. It captures the expected cost of entropy regularization; it does not guide the optimal policy.
\end{remark}

\subsection{Optimal Policy in Closed Form}

Using Lemma~\ref{lem:gaussian} with $V_{xx} = 2A$ and $B = -2wA$:
\begin{equation}
\label{eq:policy-closed}
    \bar{u}^* = -\frac{m(1 + 2P_t\alpha)}{\sigma}(x - w), \qquad \varsigma^{*2} = \frac{\tau}{2\sigma^2 A}.
\end{equation}

The mean control is linear in tracking error $(x - w)$ with belief-dependent gain. The policy variance depends on $(t,m)$ through $A(t,m)$, but is independent of wealth $x$ and target $w$.

\begin{remark}[Verification]
\label{rem:verification}
The candidate value function \eqref{eq:value-complete} is a classical solution to the HJB equation \eqref{eq:hjb-reduced}. Verification follows the finite-horizon argument of \citet[Theorem 3.5.2]{pham2009}, whose hypotheses we adapt to the present problem. Define the cost-adjusted process
\[
    M_t^\pi := V(t, X_t, m_t) + \tau \int_0^t H(\pi_s)\,ds.
\]
Applying It\^{o}'s formula, the HJB inequality $V_t + \mathcal{L}^\pi V + \tau H(\pi) \geq 0$ (with equality for $\pi^*$) shows that $M_t^\pi$ is a local submartingale under any admissible $\pi$ and a local martingale under $\pi^*$. The candidate \eqref{eq:value-complete} has quadratic growth in $(x,m)$---since $\alpha(t) < 0$, the factor $e^{\alpha m^2}$ is bounded, so $|V| \leq e^{\gamma(t)}(x-w)^2 + |\eta(t)|m^2 + |\zeta(t)|$---which together with the moment bounds of Proposition~\ref{prop:admissibility} lets us localize and pass to the limit by uniform integrability. Taking expectations gives
\[
    V(0,x_0,m_0) \leq \E\!\left[(X_T - w)^2 + \tau \int_0^T H(\pi_s)\,ds\right]
\]
for all admissible $\pi$, with equality under the optimal policy $\pi^*$ when it is itself admissible---that is, when $P_0 T < 7/2$ (Proposition~\ref{prop:admissibility})---confirming that $V$ equals the value function \eqref{eq:value}.
\end{remark}

\begin{proposition}[Admissibility of optimal policy]
\label{prop:admissibility}
The optimal Gaussian policy \eqref{eq:policy-closed} is admissible in the sense of Definition~\ref{def:admissible} if and only if
\begin{equation}
\label{eq:admissibility-condition}
    \sup_{t \in [0,T]}\; \frac{2P_0^2\,t(T-t)}{1 + P_0(2T - t)} < 1,
\end{equation}
equivalently the clean parameter restriction $P_0 T < 7/2$. The condition controls
\[
    \E\!\left[e^{|\alpha(t)|\,m_t^2}\right] < \infty \quad \text{for all } t \in [0,T],
\]
which is what makes both the variance and the mean components of $\E_{\pi^*}[u^2]$ integrable. All numerical examples in this paper satisfy \eqref{eq:admissibility-condition}; for instance, with $P_0 = 1$ and $T = 1$, the supremum equals $10 - 4\sqrt{6} \approx 0.20$.
\end{proposition}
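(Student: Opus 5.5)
The plan is to compute $\E_{\pi^*}[u^2]$ explicitly along the optimal dynamics and reduce admissibility to finiteness of Gaussian exponential moments of $m_t$. By \eqref{eq:policy-closed},
\[
\int_\R u^2\pi^*_t(u)\,du=\frac{m_t^2(1+2P_t\alpha(t))^2}{\sigma^2}(X_t-w)^2+\frac{\tau}{2\sigma^2}\,e^{-\gamma(t)}e^{|\alpha(t)|m_t^2},
\]
using $\alpha<0$. Both terms are nonnegative, so the integrability required by Definition~\ref{def:admissible} holds if and only if each term has finite $dt\otimes d\mathbb{P}$ integral.

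\textbf{Variance term.} By Proposition~\ref{prop:filter}, $m_t=m_0+\int_0^t P_s\,d\widehat W_s$ is Gaussian with variance
\[
v(t)=\int_0^tP_s^2\,ds=P_0-P_t=\frac{P_0^2t}{1+P_0t}.
\]
For $Z\sim\N(m_0,v)$ we have $\E[e^{cZ^2}]<\infty$ if and only if $2cv<1$, with an explicit value of order $(1-2cv)^{-1/2}$. Taking $c=|\alpha(t)|$ and inserting \eqref{eq:alpha}, the factors $(1+P_0t)$ cancel, leaving
\[
2|\alpha(t)|v(t)=\frac{2P_0^2t(T-t)}{1+P_0(2T-t)}.
\]
If the supremum in \eqref{eq:admissibility-condition} is $<1$, then by continuity on the compact interval $[0,T]$ the quantity $(1-2|\alpha|v)^{-1/2}$ is bounded, so the time integral is finite. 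Conversely, if $2|\alpha(t_0)|v(t_0)\ge1$ for some $t_0$, I would show divergence on a neighbourhood of $t_0$. In the boundary case where the supremum equals exactly $1$, I would use that $1-2|\alpha|v$ vanishes quadratically at an interior maximizer. Then $(1-2|\alpha|v)^{-1/2}\sim |t-t_0|^{-1}$, which is not integrable, so the \emph{only if} direction holds with strict inequality.

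\textbf{Reduction to $P_0T<7/2$.} Put $s=P_0t$ and $k=P_0T$. The condition reads $2s(k-s)<1+2k-s$ for all $s\in[0,k]$, i.e.
\[
q(s):=2s^2-(2k+1)s+(1+2k)>0 \quad\text{on } [0,k].
\]
The discriminant of $q$ is $(2k+1)(2k-7)$. For $k<7/2$ it is negative, so $q>0$ everywhere. For $k\ge 7/2$ the vertex $(2k+1)/4$ lies in $[0,k]$ and $q$ is nonpositive there; at $k=7/2$ it touches zero at $s=2$. This gives exactly $P_0T<7/2$. The numerical value $10-4\sqrt6$ for $P_0=T=1$ then follows by maximizing $2s(1-s)/(3-s)$ directly.

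\textbf{Mean term (main obstacle).} The difficulty is that $(X_t-w)^2$ is coupled to $m_t$ through the bilinear drift. Under $\pi^*$, $Y_t:=X_t-w$ solves a linear SDE with a zero-mean randomization noise of variance $\varsigma^{*2}$. The first step I would try is the martingale structure from Remark~\ref{rem:verification}: apply It\^o to $A(t,m_t)Y_t^2$, using that $A$ solves \eqref{eq:A-pde}. The $Y^2$-drift cancels, leaving only the source term $\sigma^2\varsigma^{*2}A=\tau/2$, so
\[
\E[A(t,m_t)Y_t^2]=A(0,m_0)(x_0-w)^2+\tfrac{\tau}{2}t
\]
(after localization). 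Then bound
\[
m_t^2Y_t^2=\bigl(m_t^2e^{-\alpha m_t^2-\gamma}\bigr)\cdot AY_t^2 .
\]
The bracketed factor grows like $e^{|\alpha|m^2}$, so this requires a H\"older split with exponent slightly above $1$ on the first factor. That is available precisely because of the strict inequality and the resulting slack in the Gaussian moment. I expect this Hölder step to be the delicate part: it may instead require conditioning on the path of $m$, under which $Y$ is conditionally Gaussian with explicit variance, and then integrating against the law of $m_t$. In either route the finiteness condition is again $2|\alpha|v<1$, so the mean term imposes no constraint beyond the variance term.
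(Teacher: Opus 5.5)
Your variance step, the necessity direction, and the reduction to $P_0T<7/2$ are correct and follow the paper. Your discriminant $(2k+1)(2k-7)$ in $s=P_0t$ is a more explicit version of the paper's ``direct calculation'' with touching point $t_*=4T/7$. The exact Gaussian moment is $(1-2cv)^{-1/2}\exp\bigl(cm_0^2/(1-2cv)\bigr)$ rather than just $(1-2cv)^{-1/2}$, but this changes neither direction.

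\textbf{Gap: the mean term.} You flag this step but do not close it. Your identity $\E[A(t,m_t)Y_t^2]\le A(0,m_0)Y_0^2+\tfrac{\tau}{2}t$ is right: weighting by $A$ makes the $Y^2$-drift cancel exactly against \eqref{eq:A-pde}. But it controls only the \emph{first} moment of $AY_t^2$. Your H\"older split of $\E[(m_t^2/A)\cdot AY_t^2]$ puts an exponent $p$ slightly above $1$ on $m_t^2/A$. That forces the large conjugate exponent $q=p/(p-1)$ onto $AY_t^2$, and you have no bound on $\E[(AY_t^2)^q]$. The conditioning-on-$m$ route is only sketched. Its conditional second moment involves stochastic exponentials of $\int m_s h(s)\,d\widehat{W}_s$, so the claim that it again reduces to $2|\alpha|v<1$ is not justified.

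\textbf{The missing idea.} Weighting by $A$ cancels the very term you need. Apply It\^o to $Y_t^2$ itself, with your $Y_t=X_t-w$ and no weight. Let $h(t):=1+2P_t\alpha(t)=(1+2P_t(T-t))^{-1}\in(0,1]$, so $\bar{u}^*=-m_th(t)Y_t/\sigma$. The drift of $Y_t^2$ under $\pi^*$ is then $-m_t^2h(2-h)Y_t^2+\sigma^2\varsigma^{*2}(t,m_t)$. So the quantity to be bounded appears as a dissipation term. Localize at $T_n:=\inf\{t:|Y_t|+|m_t|>n\}$, drop $\E[Y^2_{T\wedge T_n}]\ge 0$, and let $n\to\infty$. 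This gives $\E\int_0^T m_t^2h(2-h)Y_t^2\,dt\le Y_0^2+\sigma^2\int_0^T\E[\varsigma^{*2}(t,m_t)]\,dt$, which is finite by your variance step. Finally, since $h\le 1$, $\bar{u}^{*2}=m_t^2h^2Y_t^2/\sigma^2\le m_t^2h(2-h)Y_t^2/\sigma^2$. This is how the paper closes the mean term, with no H\"older inequality or higher moments, and it confirms that the mean component imposes no condition beyond the variance one.
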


\begin{proof}
Admissibility requires $\E[\int_0^T \int_\R u^2\,\pi^*_t(u)\,du\,dt] < \infty$, i.e.\ both
\[
    \E\!\left[\int_0^T \varsigma^{*2}(t,m_t)\,dt\right] < \infty
    \qquad \text{and} \qquad
    \E\!\left[\int_0^T \bar{u}^{*2}(t,X_t,m_t)\,dt\right] < \infty.
\]

\emph{Variance.} Since $\alpha(t) < 0$,
\[
    \varsigma^{*2} = \frac{\tau}{2\sigma^2}\,e^{|\alpha(t)|\,m_t^2 - \gamma(t)}
\]
grows exponentially in $m_t^2$, so $\E[\varsigma^{*2}] < \infty$ iff $\E[e^{|\alpha(t)|\,m_t^2}] < \infty$. The posterior mean is $m_t = m_0 + \int_0^t P_s\,d\widehat{W}_s$, so by It\^o isometry
\[
    \Var(m_t) = \int_0^t P_s^2\,ds = \frac{P_0^2\,t}{1 + P_0\,t}.
\]
For $Z \sim \N(\mu,v)$ one has $\E[e^{cZ^2}] < \infty$ iff $2cv < 1$; with $c = |\alpha(t)|$ and $v = \Var(m_t)$,
\[
    2|\alpha(t)|\,\Var(m_t) = \frac{2P_0^2\,t(T-t)}{1 + P_0(2T - t)},
\]
so the pointwise moment is finite iff this is $< 1$, which is \eqref{eq:admissibility-condition}. Writing $z := P_0 T$, $s := t/T$,
\[
    \sup_{s \in [0,1]} \frac{2 z^2 s(1-s)}{1 + z(2-s)} < 1,
\]
and a direct calculation (interior maximizer $s_* = 4/7$ at $z = 7/2$) shows the supremum equals $1$ exactly at $z = 7/2$, giving $P_0 T < 7/2$. Since the left side of \eqref{eq:admissibility-condition} is continuous in $t$ on $[0,T]$, the condition makes $\inf_t\,(1 - 2|\alpha(t)|\Var(m_t)) > 0$, so $\E[e^{|\alpha(t)|m_t^2}]$, hence $\E[\varsigma^{*2}(t,m_t)]$, is bounded uniformly in $t$, and $\int_0^T \E[\varsigma^{*2}]\,dt < \infty$.

\emph{Mean.} Write $Z_t := X_t - w$ and $h(t) := 1 + 2P_t\alpha(t) = (1 + 2P_t(T-t))^{-1} \in (0,1]$, so $\bar{u}^* = -m_t h(t) Z_t/\sigma$. It\^o's formula under $\pi^*$ gives
\[
    dZ_t^2 = \big[\,m_t^2 h(t)(h(t)-2)Z_t^2 + \sigma^2\varsigma^{*2}(t,m_t)\,\big]\,dt + dN_t,
\]
with $N_t$ a local martingale. As $h \in (0,1]$, the coefficient $h(h-2) < 0$: tracking error mean-reverts. Localizing at $T_n := \inf\{t : |Z_t| + |m_t| > n\}$, taking expectations, and applying Fatou to the nonnegative term,
\[
    \E\!\int_0^T m_t^2\,h(t)(2 - h(t))Z_t^2\,dt \;\le\; Z_0^2 + \sigma^2\!\int_0^T \E[\varsigma^{*2}(t,m_t)]\,dt \;<\; \infty.
\]
Since $h(t)(2-h(t)) \ge h(t) \ge (1 + 2P_0 T)^{-1} > 0$, this gives $\E[\int_0^T m_t^2 Z_t^2\,dt] < \infty$, the mean-control bound.

\emph{Necessity.} If the supremum in \eqref{eq:admissibility-condition} is $\ge 1$, the pointwise moment criterion fails badly enough that $\int_0^T \E[\varsigma^{*2}]\,dt = \infty$: for $P_0 T > 7/2$ it fails on an open interval of times, while at $P_0 T = 7/2$ it fails at the single instant $t_* = 4T/7$, where the quadratic touching gives $\E[\varsigma^{*2}(t,m_t)] \sim |t - t_*|^{-1}$. Hence $\pi^*$ is inadmissible.
\end{proof}

\begin{assumption}
\label{assump:admissible}
Throughout the remainder of the paper, $P_0 T < 7/2$, so that the optimal policy is admissible (Proposition~\ref{prop:admissibility}).
\end{assumption}

\section{Limiting Cases}
\label{sec:limits}

Two limits recover existing results.

\subsection{Known Drift: $P_0 \to 0$}

When $P_0 \to 0$, the posterior collapses: $P_t \equiv 0$ and $m_t \equiv m_0 =: \rho$. From \eqref{eq:alpha}, $\alpha(t) \to -(T - t)$. From \eqref{eq:gamma}, $\gamma(t) \to 0$ since $P_s \to 0$. Thus $A(t) \to e^{-(T-t)\rho^2}$.

The reduced HJB becomes
\[
    0 = V_t - \frac{\rho^2 V_x^2}{2V_{xx}} - \frac{\tau}{2}\log\!\left( \frac{2\pi\tau}{\sigma^2 V_{xx}} \right),
\]
and the optimal policy is
\[
    \bar{u}^* = -\frac{\rho}{\sigma}(x - w), \qquad \varsigma^{*2} = \frac{\tau}{2\sigma^2}e^{\rho^2(T-t)},
\]
recovering \citet{wang2019}, Theorem 1.

\subsection{Deterministic Control: $\tau \to 0$}

When $\tau \to 0$, the policy variance $\varsigma^{*2} = \frac{\tau}{2\sigma^2 A} \to 0$. The policy collapses to deterministic feedback:
\[
    u^* = \bar{u}^* = -\frac{m(1 + 2P_t\alpha)}{\sigma}(x - w),
\]
matching \citet{defranco2018}.

\section{Discussion}
\label{sec:discussion}

Sections~\ref{sec:hjb}--\ref{sec:value} established the closed-form solution; we now read off what it means.

\subsection{Orthogonality of Entropy and Learning}

The posterior dynamics \eqref{eq:dm}--\eqref{eq:dP} depend on the observation process alone: the control $\pi$ does not appear. The investor observes prices, not portfolio returns, so randomizing the policy cannot accelerate learning about $\rho$. The mean control is $\tau$-independent, the curvature $V_{xx} = 2A$ is $\tau$-independent, and entropy enters only through the policy variance $\varsigma^{*2} = \tau/(2\sigma^2 A)$ and the additive premium $D$ (which does not affect the optimal policy).

\subsection{Belief-Dependent Policy Variance}

Recall from \eqref{eq:policy-closed} that $\varsigma^{*2}(t,m) = \frac{\tau}{2\sigma^2 A(t,m)}$ with $A = e^{\alpha(t)m^2 + \gamma(t)}$.

\begin{proposition}[Belief-dependent variance]
\label{prop:beliefvar}
Under Assumption~\ref{assump:prior} with $\tau > 0$, the optimal policy variance $\varsigma^{*2}(t,m)$ is: symmetric in $m$, minimized at $m = 0$, and strictly increasing in $|m|$ for $|m| > 0$ at each fixed $t < T$.
\end{proposition}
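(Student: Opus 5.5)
The plan is to read the three properties directly off the closed form. By \eqref{eq:policy-closed} and Proposition~\ref{prop:closed-form},
\[
\varsigma^{*2}(t,m) = \frac{\tau}{2\sigma^2}\exp\!\big(-\alpha(t)m^2 - \gamma(t)\big).
\]
At fixed $t$, this is a strictly increasing function, $\frac{\tau}{2\sigma^2}e^{-\gamma(t)}e^{y}$, composed with $y = -\alpha(t)m^2$. The prefactor is positive because $\tau>0$.

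\textbf{Symmetry.} The expression depends on $m$ only through $m^2$, so $\varsigma^{*2}(t,m)=\varsigma^{*2}(t,-m)$ immediately.

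\textbf{Monotonicity.} This reduces to the sign of $\alpha(t)$. From \eqref{eq:alpha}, and equivalently from the form $\alpha(t) = -(T-t)/(1+2P_t(T-t))$ in Remark~\ref{rem:tau-absent}, the numerator $T-t$ is positive for $t<T$. The denominator is at least $1$, since $P_t = P_0/(1+P_0t)\ge 0$. Hence $\alpha(t)<0$ strictly for $t<T$, and $-\alpha(t)m^2 = |\alpha(t)|\,|m|^2$ is strictly increasing in $|m|$ on $(0,\infty)$. Composing with the strictly increasing exponential shows that $\varsigma^{*2}$ is strictly increasing in $|m|$ for $|m|>0$.

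\textbf{Minimum.} For every $m\neq 0$ we then have $\varsigma^{*2}(t,m) > \varsigma^{*2}(t,0) = \frac{\tau}{2\sigma^2}e^{-\gamma(t)}$, so the minimum is attained uniquely at $m=0$. Differentiating is not strictly needed, but as a check, $\partial_m \varsigma^{*2} = 2|\alpha(t)|\,m\,\varsigma^{*2}$ has the sign of $m$.

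\textbf{Main obstacle.} The only point requiring care is the boundary case. At $t=T$ we have $\alpha(T)=0$, so the variance is constant in $m$ there; this is why the statement restricts to $t<T$. The sign claim on $\alpha$ is the single substantive input, and it is already essentially recorded in Remark~\ref{rem:tau-absent}.
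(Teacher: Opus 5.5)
Your proof is correct and follows the paper's argument: both read symmetry, the minimum at $m=0$, and strict monotonicity in $|m|$ off the closed form $\varsigma^{*2} = \frac{\tau}{2\sigma^2}e^{-\alpha(t)m^2-\gamma(t)}$, with $\alpha(t)<0$ for $t<T$ as the only substantive input. The differences are cosmetic. The paper uses the derivative $\partial_m\varsigma^{*2} = -2\alpha(t)m\,\varsigma^{*2}$ and simply cites $\alpha<0$, whereas you compose monotone maps and check the sign of $\alpha$ directly from its explicit formula.
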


\begin{proof}
From \eqref{eq:policy-closed},
\[
    \varsigma^{*2}(t,m) = \frac{\tau}{2\sigma^2}\,e^{-\alpha(t)m^2 - \gamma(t)}.
\]
This depends on $m$ only through $m^2$, giving symmetry. Differentiating in $m$:
\[
    \frac{\partial\,\varsigma^{*2}}{\partial m} = -2\alpha(t)\,m\,\varsigma^{*2}(t,m).
\]
By Proposition~\ref{prop:closed-form}, $\alpha(t) < 0$ for $t < T$, so $-2\alpha(t) > 0$. The derivative has the same sign as $m$; negative for $m < 0$, zero at $m = 0$, positive for $m > 0$. Hence $\varsigma^{*2}$ is minimized at $m = 0$ and strictly increasing in $|m|$ for $|m| > 0$.
\end{proof}

The mechanism runs through the belief, not the control: large $|m|$ simultaneously makes the mean control aggressive ($|\bar{u}^*| \propto |m|$) and, since $\alpha(t) < 0$, shrinks the curvature coefficient $A(t,m) = e^{\alpha m^2 + \gamma}$, so that $\varsigma^{*2} = \tau/(2\sigma^2 A)$ grows. The variance responds to belief magnitude directly, not to the size of the position.
This is already present in the known-drift model, where $\varsigma^{*2} = (\tau/2\sigma^2)\exp(\rho^2(T-t))$ grows with $|\rho|$. Partial information replaces $\rho$ with $m_t$ and $(T-t)$ with $|\alpha(t)|$, modifying the coefficients but not the structure.

\subsection{Entropy Regularization Is Not Exploration}

In reinforcement learning with an unknown environment, entropy regularization pushes the agent toward diverse actions, and the resulting data sharpens its estimates; policy variance shrinks as the agent learns. None of that applies here. The observation channel is exogenous to the control, so randomizing the position reveals nothing about $\rho$, and the entropy penalty does no more than spread probability mass around the mean control. That spread is widest when the mean control is most aggressive (large $|m_t|$). Along sample paths its only stochastic driver is $m_t$; the posterior variance $P_t$ enters $\varsigma^{*2}$ only deterministically, through the coefficient $\alpha(t)$, where it acts to dampen rather than amplify the dependence on $m_t$ (larger $P_t$ lowers $|\alpha(t)|$).

\subsection{On Robustness}

One might interpret the belief-dependent variance as robustness. The agent randomizes more when its position is most aggressive, hedging against the possibility that the posterior is wrong. However, the added noise is zero-mean and symmetric: it can amplify losses as easily as mitigate them, and it is purchased entirely by the entropy bonus (the premium $D$ of Proposition~\ref{prop:entropy-premium}). A formal connection to distributionally robust control---via Donsker--Varadhan duality, for example---would require an adversarial structure that is not present here.

\subsection{Numerical Illustration}

Figure~\ref{fig:variance} visualizes Proposition~\ref{prop:beliefvar}: the policy variance is symmetric in $m$, minimized at $m = 0$, and grows exponentially in $|m|$, with the dependence flattening as $t \to T$ and $\alpha(t) \to 0$.

\begin{figure}[htbp]
    \centering
    \includegraphics[width=\textwidth]{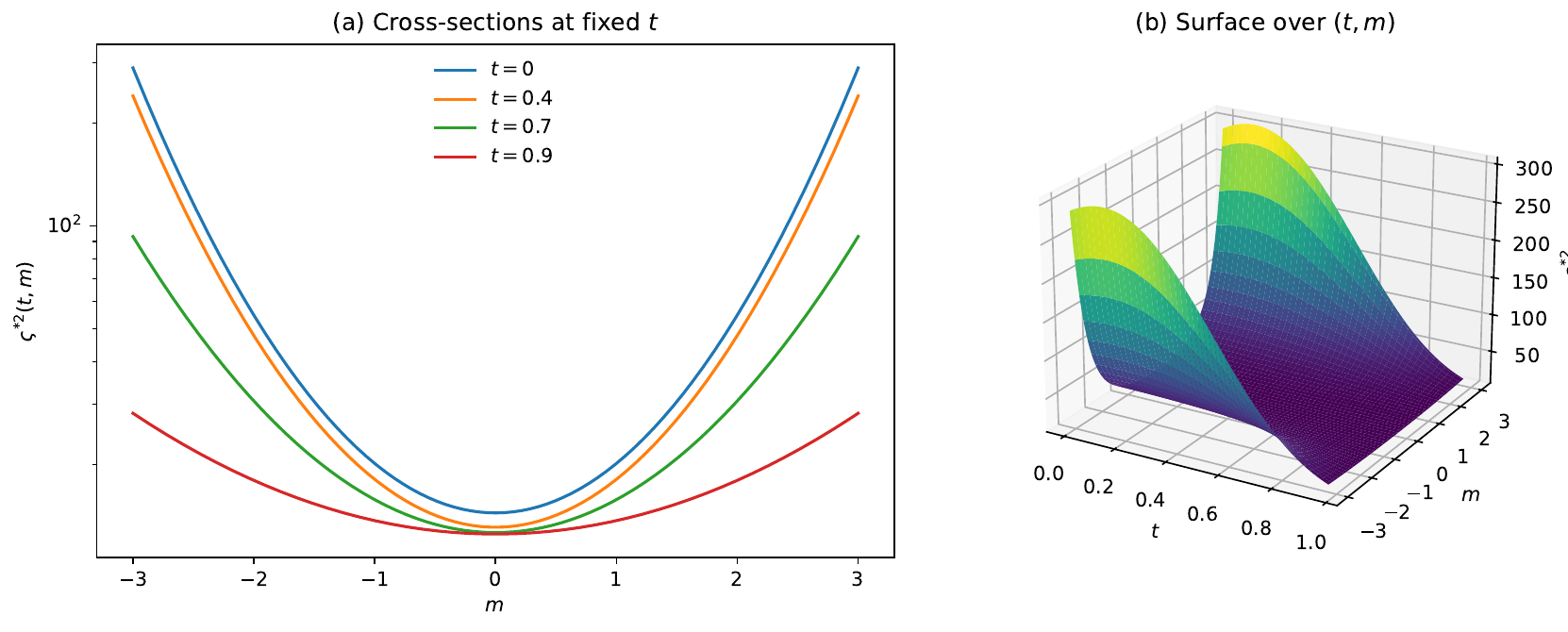}
    \caption{Optimal policy variance $\varsigma^{*2}(t,m)$. (a) Cross-sections at fixed $t$: the variance is symmetric in $m$, minimized at $m=0$, and exponentially increasing in $|m|$ at each $t < T$; the curve flattens as $t \to T$ since $\alpha(T) = 0$. (b) Surface over the $(t,m)$ plane: variance is largest where $|m|$ is large and time-to-go is long. Parameters: $P_0 = 1$, $T=1$, $\tau=1$, $\sigma=0.2$.}
    \label{fig:variance}
\end{figure}

\section{Conclusion}
\label{sec:conclusion}

We solved the entropy-regularized mean--variance problem under Bayesian drift uncertainty, filling the remaining cell of the $2 \times 2$ table (known vs.\ unknown drift $\times$ deterministic vs.\ entropy-regularized control), whose other three cells are classical mean--variance, \citet{defranco2018}, and \citet{wang2019}. The closed-form solution is clean and both parent models are limiting cases. The main finding is that the two mechanisms are orthogonal: when observations are exogenous to the control, entropy cannot explore, and the belief-dependent policy variance it produces is the same phenomenon as in the known-drift case with $m_t$ in place of $\rho$.

\begin{remark}[Beyond orthogonality]
\label{rem:extensions}
Orthogonality is a property of the Shannon penalty, which enters the optimal policy through its variance alone and leaves the mean control free of $\tau$. A natural direction for future work is a regularizer that does not separate this way: one whose weight also shapes the mean control, so that regularization and the belief-driven trade interact rather than run in parallel.
\end{remark}
\newpage
\section*{Acknowledgments}
The author thanks Prof.\ Mark Kon for advising this project and Prof.\ Konstantinos Spiliopoulos for helpful discussions.

\bibliographystyle{plainnat}
\bibliography{references}

\newpage
\appendix

\section{Proof of Polynomial Impossibility}
\label{app:polynomial}

\begin{proof}[Proof of Proposition~\ref{prop:polynomial}]
Fix $t$ and $P > 0$, and suppose $V(t,\cdot,\cdot,P)$ is polynomial of finite degree in $(x,m)$ with $V_{xx} \not\equiv 0$, so $p := \deg_x V \geq 2$. Write $V = \sum_{k=0}^p a_k(t,m,P)x^k$ with $a_p \not\equiv 0$. All degree counts below are in $m$ at fixed $(t,P)$; the $t$- and $P$-derivatives preserve or lower $m$-degree.

Define $G := mV_x + PV_{xm}$. The leading term is $G = p(ma_p + Pa_{p,m})x^{p-1} + O(x^{p-2})$. Thus
\[
    \frac{G^2}{2V_{xx}} = \frac{p}{2(p-1)}\frac{(ma_p + Pa_{p,m})^2}{a_p}x^p + O(x^{p-1}).
\]

Dividing \eqref{eq:hjb-reduced} by $x^p$ and taking $x \to \infty$:
\[
    0 = \partial_t a_p + \frac{P^2}{2}\partial_{mm}a_p - P^2\partial_P a_p - \frac{p}{2(p-1)}\frac{(ma_p + Pa_{p,m})^2}{a_p}.
\]

The bracket $ma_p + Pa_{p,m}$ cannot vanish identically in $m$: the equation $ma + Pa_m = 0$ forces $a \propto e^{-m^2/(2P)}$ (using $P > 0$), which is polynomial only if $a \equiv 0$, contradicting $a_p \not\equiv 0$. The nonlinear term is therefore genuinely present.

Let $r = \deg_m a_p$. The final term grows like $m^{r+2}$ as $|m| \to \infty$, while linear terms grow at most like $m^r$. Contradiction. 
\end{proof}

\section{Notation}
\label{app:notation}

\begin{center}
\renewcommand{\arraystretch}{1.35}
\begin{tabular}{p{1.8cm} p{11cm}}
\toprule
\multicolumn{2}{l}{\textbf{Market}} \\
\midrule
$S_t$ & Risky asset price \\
$\mu, \sigma, r$ & Drift (unknown), volatility (known), risk-free rate \\
$\rho$ & Sharpe ratio $(\mu - r)/\sigma$ (unknown) \\
$W_t$ & Brownian motion driving prices \\
$T$ & Investment horizon \\
\midrule
\multicolumn{2}{l}{\textbf{Filtering}} \\
\midrule
$m_t$ & Posterior mean of $\rho$ given observations up to time $t$ \\
$P_t$ & Posterior variance of $\rho$ given observations up to time $t$ \\
$m_0, P_0$ & Prior mean and variance of $\rho$ \\
$\widehat{W}_t$ & Innovation process (Brownian motion adapted to observations) \\
\midrule
\multicolumn{2}{l}{\textbf{Control}} \\
\midrule
$X_t$ & Discounted wealth \\
$u_t$ & Position in risky asset (discounted dollars) \\
$\pi_t$ & Policy: probability distribution over positions at time $t$ \\
$\bar{u}, \varsigma^2$ & Mean and variance of policy $\pi$ \\
$w$ & Target wealth level in the objective; Lagrange multiplier for the $\E[X_T] = z$ constraint \\
$z$ & Target expected terminal wealth, $\E[X_T] = z$ (embedding constraint) \\
$\tau$ & Entropy weight (regularization parameter) \\
$H(\pi)$ & Negative of differential entropy: $\int \pi(u)\log\pi(u)\,du$ \\
\midrule
\multicolumn{2}{l}{\textbf{Value Function}} \\
\midrule
$V$ & Value function $V(t,x,m,P)$ \\
$A, B, C$ & Coefficients in ansatz $V = Ax^2 + Bx + C$ \\
$\alpha, \gamma$ & Functions of $t$ in closed-form $A = e^{\alpha m^2 + \gamma}$ \\
$\eta, \zeta$ & Functions of $t$ in entropy premium $D = \eta m^2 + \zeta$ \\
\bottomrule
\end{tabular}
\end{center}

\end{document}